\newenvironment{proof}[1][Proof]{\noindent\textbf{#1.} }{\ \rule{0.5em}{0.5em}}
\newtheorem{De}{Definition}[section]
\newtheorem{Pro}[De]{Proposition}
\newtheorem{Le}[De]{Lemma}
\newtheorem{Co}[De]{Corollary}
\newtheorem{Rem}[De]{Remark}
\newtheorem{Ex}[De]{Example}
\renewcommand{\ker}{\ensuremath{\mathsf{Ker\,}}}
\newcommand{\Lieg}{\ensuremath{\mathfrak{g}}}
\newcommand{\LieU}{\ensuremath{\mathfrak{U}}}
\newbox\pullbackbox
\newcommand{\g}{\frak g}
\begin{document}

\centerline{\bf  From Trigroups to Leibniz 3-algebras}

\bigskip
\bigskip
\centerline{\bf Guy R. Biyogmam }

\bigskip
\centerline{Department of Mathematics, Georgia College \& State University}
\centerline{Campus Box 17 Milledgeville, GA 31061-0490}
\centerline{ {E-mail address}: guy.biyogmam@gcsu.edu}
\bigskip

\centerline{\bf Calvin Tcheka }
\bigskip
\centerline{Department of Mathematics, Faculty of sciences-University of Dschang}
\centerline{Campus Box (237)67 Dschang, Cameroon}
\centerline{ {E-mail address}:  jtcheka@yahoo.fr}
\bigskip

\bigskip

\date{}

\bigskip \bigskip \bigskip

{\bf Abstract:}
In this paper, we study the category of trigroups as a generalization of the notion of digroup \cite{MK} and analyze their relationship with 3-racks \cite{BGR1} and Leibniz 3-algebras \cite{CLP}. Trigroups are essentially  associative trioids in which there are bar-units and bar-inverses. We prove that    3-racks can be constructed by conjugating  trigroups. We also prove that trigroups equipped with a smooth manifold structure produce Leibniz 3-algebras via their associated Lie 3-racks.

{\bf 2010 MSC:} 17A99; 20M99.
\bigskip

{\bf Key words:}  Digroup, 3-rack, Trigroup, Leibniz 3-algebra.
\bigskip

\section{Introduction}
An axiomatic definition of the concept of digroups was introduced by M. Kinyon in \cite{MK} as a generalization of groups  in his partial solution to the coquecigrue problem, which consists of generalizing Lie's third theorem to Leibniz algebras \cite{Lo2}. Other axiomatic descriptions of digroups were independently studied by R. Felipe in \cite{FR2} and R. Liu in \cite{KL1} . It is worth mentioning that prior to these axiomatic definitions, the notion of digroups already appeared implicitly in Loday's work on dialgebras \cite{Lo3}. Similarly to how digroups are related to groups, a trigroup is a set A endowed with 3 binary operations $\vdash,\perp$ and $\dashv$ so that $(A,\vdash,\dashv)$ is a digroup, and $(A,\vdash,\perp)$  and $(A,\perp,\dashv)$ are disemigroups  in which the operations are compatible with bar-units and appropriate inverses. In this paper, we generalize the conjugation of digroups to trigroups and show that every trigroup $A$ is equipped with a pointed 3-rack structure, and thus produces a pointed rack structure on $A\times A$  by \cite[Example 2.6]{BGR1}. When trigroups are also smooth manifolds, their associated 3-racks inherit the smooth manifold structure, and  produce Leibniz 3- algebras thanks to \cite[Corollary 3.6]{BGR1}.  Another  analysis of a relationship between Leibniz 3-algebras and an algebra with 3 associative operations, namely  trialgebras \cite[Definition 1.1]{Lo3}, was conducted in \cite{cas1}. Similarly to digroups, trigroups along with their homomorphisms constitute a pointed category, which generalizes both the category of groups and the category of digroups.  This category is a subcategory of the category of associative trioid introduced by J. L. Loday and M. O. Ronco  in \cite[Definition 1.1]{Lo3}.

This paper is organized as follows: In section 2, we present the axioms of the known notion of disemigroup in a way suitable  to define the notion of trisemigroup, then  we provide a construction of trisemigroups from given disemigroups. In section 3, we study the notion of trigroups and prove several properties related to their operations. In section 4, we use these properties to  construct a functor from the category of trigroups to the category of Leibniz 3-algebras.

\section{Preliminaries } \label{preliminaries}
\subsection{Disemigroups}

\begin{De}
A  left disemigroup $(A,\vdash,\star)$ is a set $A$ together with two binary operations $\vdash$ and $\star$ such that $(A,\vdash)$ and $(A,\star)$  are semigroups satisfying the following relations:
\begin{enumerate}

\item[(L1)]  $x\vdash( y\vdash z)=(x\star y)\vdash z$
\item[(L2)] $x\vdash (y\star z)=(x\vdash y)\star z$
\end{enumerate}
for all $x,y,z\in A.$
\end{De}

\begin{Rem}\label{re1}
$(x\star (y\vdash z))\vdash t=(x\star y)\vdash (z\vdash t)$ for all $x,y,z,t\in A.$
\end{Rem}
\begin{proof}
We use (L1) and the fact that the operation $\vdash$ is associative:

$\begin{aligned}
(x\star (y\vdash z))\vdash t &=x\vdash( (y\vdash z)\vdash t) =(x\vdash (y\vdash z))\vdash t \\&=((x\star y)\vdash z)\vdash t =(x\star y)\vdash (z\vdash t).\end{aligned}$
\end{proof}
\begin{De}
A  right disemigroup $(A,\star,\dashv)$ is a set $A$ together with two binary operations  $\star$ and $\dashv$ such that $(A,\star)$ and $(A,\dashv)$  are semigroups satisfying the following relations:
\begin{enumerate}
\item[(R1)]  $x\dashv (y\dashv z)=x\dashv(y\star z)$
\item[(R2)] $(x\star y)\dashv z=x\star (y\dashv z)$

\end{enumerate}
for all $x,y,z\in A.$
\end{De}
Similarly to Remark \ref{re1}, we have 
\begin{Rem}
$x\dashv((y\dashv z)\star t)=(x\dashv y)\dashv (z\star t)$ for all $x,y,z,t\in A.$
\end{Rem}

\begin{Rem}
 $(A,\vdash, \dashv)$ is a disemigroup (\cite[Definition 4.1]{MK}) if it is both a left disemigroup and a right disemigroup.

\end{Rem}

\begin{De}
A disemigroup $(A,\vdash,\dashv)$ is a   left digroup (resp. right digroup)   if there exists an element $1\in A$ such that 
 $1\vdash x=x$ (resp. $x\dashv 1=x$) for all $x\in A,$ and 
 for all $x\in A,$ there exists $x^{-1}\in A$ satisfying  $x\vdash x^{-1}=1$ (resp. $ x^{-1}\dashv x=1$).%Similarly we define left digroups.
% \color{red}{may be revised}
\end{De}

\begin{Rem}
A set $(A,\vdash,\dashv)$ is a  digroup  if and only if it is a left digroup and a right digroup. %\color{red}{May be revised }
\end{Rem}
\subsection{Trisemigroups}
The following generalizes the definition of disemigroups to a ternary algebra.
%\section{Trigroups} \label{Trigroups}
\begin{De}
A  trisemigroup $(A,\vdash,\perp,\dashv)$ is a set $A$ equipped with three binary operations  $\vdash,$ $\perp$  and $\dashv$ respectively called left, middle and right, and satisfying the following conditions:
\begin{enumerate}
\item[(T1)]  $(A,\vdash, \dashv)$  is a disemigroup
\item[(T2)] $(A,\vdash,\perp)$ is a left disemigroup
\item[(T3)]  $(A,\perp,\dashv)$ is a right disemigroup
\item[(T4)] $(x\dashv y)\perp z=x\perp (y\vdash z)$ for all $x,y,z\in A.$
\end{enumerate}
\end{De}

Note that there are 11 axioms in the conditions $(T1), (T2), (T3)$ and  $(T4).$ These axioms are exactly the 11 relations of the definition of an associative trioid introduced by Loday and Ronco \cite[Defintion 1.1]{Lo3}. It is worth mentioning that  J. D. Phillips's  work  \cite{JDP} on digroups reduces these 11 axioms to 7 axioms. In this paper, we use the terminology "trisemigroup" instead of "associative trioid" to remain in the semigroup jargon. Also, we use the notation $"ass"$ to refer to the associativity of the operations $\vdash,\perp,\dashv.$
\subsection{From Disemigroups to Trisemigroups}
Using J. D. Phillips's  work  \cite{JDP} on digroups, it is clear that  a trisemigroup  $(A,\vdash,\perp,\dashv)$ is equipped with at least 3 disemigroup structures, namely: $(A,\vdash,\dashv),~$ $(A,\vdash,\perp)$ and $(A,\perp,\dashv). $ %if  $(A,\vdash,\perp,\dashv)$ is a trisemigroup, then $(A,\vdash,\dashv)$ is a disemigroup.
%\begin{Ex}
Now, let $G$ be a set endowed with two binary operations $\vdash, \dashv$. Then define on $G\times G$ the following binary operations: 
%and let $M$ and $H$ be two sets such that   $H$ acts on the left of $M,$ there exists a fixed point $e\in M$ satisfying  $he = e$ for all $h \in H,$ and  $H$ acts transitively on $M-\{e\}.$ Now define on  $G := M × H$ , the operations:
\begin{enumerate}
\item[a)] $(u,h) \Vdash (v,k) = (h\vdash v, h\vdash k)$  
\item[b)] $(u,h)\dashV (v,k) = (u,h\dashv k)$ 
\item[c)] $(u,h) \Perp (v,k) =(h\dashv v, h\dashv k)$
 \end{enumerate}
for all $u,v , h,k \in G.$ 

Then we have the following:
\begin{enumerate}
\item $(u,h)\dashV((v,k)\dashV (w,l))=(u,h)\dashV (v,k\dashv l)=(u,h\dashv (k\dashv l))$
\item $(u,h)\dashV((v,k)\Perp(w,l))=(u,h)\dashV (k\dashv w, k\dashv l)=(u,h\dashv (k\dashv l))$

\item $((u,h)\Perp(v,k))\dashV (w,l)=(h\dashv v,h\dashv k)\dashV (w,l)=(h\dashv v, h\dashv (k\dashv  l))$

\item $(u,h)\Perp((v,k)\dashV (w,l))=(u,h)\Perp (v,k\dashv l)=(h\dashv v, h\dashv (k\dashv  l))$

\item $(u,h)\Vdash((v,k)\Vdash(w,l))=(u,h)\Vdash (k\vdash w,k\vdash l)=(h\vdash (k\vdash w),h\vdash (k\vdash l))$

%\item $((u,h)\Perp(v,k))\Vdash(w,l)=((h\dashv k)\vdash w),(h\dashv k)\vdash l))$
\item $((u,h)\Perp(v,k))\Vdash(w,l)=(h\dashv v,h\dashv k)\Vdash (w,l)=((h\dashv k)\vdash w),(h\dashv k)\vdash l))$

\item $(u,h)\Vdash((v,k)\Perp(w,l))=(u,h)\Vdash (k\dashv w,k\dashv l)=(h\vdash (k\dashv w),h\vdash (k\dashv l))$

\item $((u,h)\Vdash(v,k))\Perp(w,l)=(h\vdash v,h\vdash k)\Perp (w,l)=((h\vdash k)\dashv w),(h\vdash k)\dashv l))$

\item $((u,h)\dashV(v,k))\Perp(w,l)=(u, h\dashv k)\Perp (w,l)=((h\dashv k)\dashv w),(h\dashv k)\dashv l))$

\item $(u,h)\Perp((v,k)\Vdash(w,l))=(u,h)\Perp (k\vdash w,k\vdash l)=(h\dashv (k\vdash w),h\dashv (k\vdash l))$

\item $((u,h)\dashV(v,k))\Vdash(w,l)=(u,h\dashv k)\Vdash (w,l)=((h\dashv k)\vdash w),(h\dashv k)\vdash l))$

\item $(u,h)\Vdash((v,k)\dashV(w,l))=(u,h)\Vdash(v,k\dashv l)=(h\vdash v,h\vdash( k\dashv l))$
\item $((u,h)\Vdash(v,k))\dashV(w,l)=(h\vdash v,h\vdash k)\dashV (w,l)=(h\vdash v,(h\vdash k)\dashv l))$
\item $(u,h)\dashV((v,k)\Vdash(w,l))=(u, h)\dashV(k\vdash w,k\vdash l)=(u,h\dashv (k\vdash l))$

\end{enumerate}

As a consequence, we have the following results:

\begin{Pro}
Let $G$ be a set endowed with two binary operations $\vdash, \dashv.$

%Let $(G,\vdash,\dashv)$ be a disemigroup. Then 
\begin{enumerate}

\item[a)] $(G\times G, \Perp,\dashV)$ is a right disemigroup.
\item[b)] If  $( G, \vdash,\dashv)$ is a disemigroup, then $(G\times G, \Vdash,\dashV)$ is a disemigroup.
\item[c)] If  $( G, \vdash,\dashv)$ is a left  disemigroup, then $(G\times G, \Vdash,\Perp)$ is a left disemigroup.
\item[d)] If  $( G, \vdash,\dashv)$ is a left  disemigroup, then $(G\times G, \Vdash,\Perp,\dashV)$ is a trisemigroup.

\end{enumerate}
\end{Pro}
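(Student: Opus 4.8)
The plan is to check, for each of the four claims, precisely the defining relations of the relevant (left/right) disemigroup or trisemigroup, and to obtain every one of these relations by comparing two of the fourteen identities already computed above, component by component. Two kinds of conditions occur. First, the associativity of the three operations on $G\times G$: a direct substitution shows that $\Vdash$ is associative exactly when $\vdash$ is associative on $G$, and that $\Perp$ and $\dashV$ are associative exactly when $\dashv$ is associative on $G$ --- so these semigroup conditions are supplied by the hypothesis that $G$ is a (left) disemigroup. Second, the mixed axioms (L1), (L2), (R1), (R2) and (T4), each of which I would read off a specific pair of identities.

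For (a), the relation (R1) for $(\Perp,\dashV)$ is the equality of identities $1$ and $2$, which both equal $(u,\,h\dashv(k\dashv l))$, and (R2) is the equality of identities $3$ and $4$, both equal to $(h\dashv v,\,h\dashv(k\dashv l))$; hence these two relations hold with no extra hypothesis, and only associativity of $\dashv$ is needed to make $\Perp$ and $\dashV$ semigroups. For (c), the relation (L1) for $(\Vdash,\Perp)$ is the equality of identities $5$ and $6$, reducing to $h\vdash(k\vdash w)=(h\dashv k)\vdash w$, i.e.\ (L1) for $G$, and (L2) is the equality of identities $7$ and $8$, reducing to $h\vdash(k\dashv w)=(h\vdash k)\dashv w$, i.e.\ (L2) for $G$; both are available because $G$ is a left disemigroup.

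For (b), I would use that a disemigroup $G$ satisfies (L1), (L2), (R1) and (R2). The relation (L1) for $(\Vdash,\dashV)$ then follows from identities $5$ and $11$; the relations (L2) and (R2) both follow from identities $12$ and $13$, reducing to $(h\vdash k)\dashv l=h\vdash(k\dashv l)$; and (R1) follows from identities $1$ and $14$, reducing to $h\dashv(k\dashv l)=h\dashv(k\vdash l)$, which is exactly (R1) for $G$. Finally, (d) assembles the four trisemigroup axioms: (T2) is part (c), (T3) is part (a), (T1) is part (b), and (T4), namely $(x\dashV y)\Perp z=x\Perp(y\Vdash z)$, is the equality of identities $9$ and $10$.

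The step I expect to be the main obstacle is the status of the right-sided relation (R1) of $G$. Closing (T1) through part (b), and closing (T4) --- whose two sides are $((h\dashv k)\dashv w,\,(h\dashv k)\dashv l)$ and $(h\dashv(k\vdash w),\,h\dashv(k\vdash l))$ --- both require $h\dashv(k\dashv w)=h\dashv(k\vdash w)$ after using associativity of $\dashv$, and this is precisely (R1) for $G$. Since (R1) is not implied by $G$ being merely a left disemigroup, the argument for (d) genuinely needs $G$ to be a two-sided disemigroup; isolating and flagging this point is the delicate part, while everything else is routine componentwise bookkeeping against the listed identities.
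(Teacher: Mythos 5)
Your proof is correct and follows essentially the same route as the paper's own: every mixed axiom is read off the same pairs among the fourteen listed identities (1--2 and 3--4 for (R1), (R2) in part a); 5/11, 12/13, and 1/14 for part b); 5/6 and 7/8 for part c); 9/10 for (T4)), with the semigroup conditions on $G\times G$ supplied by associativity of $\vdash$ and $\dashv$ exactly as you state. The one place you go beyond the paper is your flag on part d), and you are right: as stated, the hypothesis ``left disemigroup'' is too weak, since establishing (T1) through part b) and closing (T4) via identities 9 and 10 both require the identity $h\dashv(k\dashv w)=h\dashv(k\vdash w)$, i.e.\ (R1) for $G$, and the paper's own proof tacitly concedes this by inserting the caveat ``whenever (R1) holds for $\vdash$ and $\dashv$.'' Note moreover that for the pair $(\vdash,\dashv)$ the axioms (L2) and (R2) are the same identity, so a left disemigroup satisfying (R1) is automatically a two-sided disemigroup; hence the correct hypothesis for d) is precisely that $(G,\vdash,\dashv)$ be a disemigroup, exactly as you conclude.
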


 %Then $(G\times G,\Vdash,\Perp,\dashV)$ is trisemigroup.
%a trigroup with distinguished bar-unit $(e, 1).$ Note that $(u, h)^{-1}=(e, h^{-1}).$
%\end{Ex}
\begin{proof} It is easy to verify that $\Vdash,$  $\dashV$ and $\Perp$ are associative whenever  $\vdash,$  $\dashv$ and $\perp$ are associative.
For a),  the axioms (R1) and (R2) are always satisfied for $\dashV$ and $\Perp$ by $1.,2.,3.$ and $4.$ For b),  the axiom (L1) is  satisfied for $\Vdash$ and $\dashV$ by $5.$ and $11.$ whenever they are satisfied for $\vdash$ and $\dashv.$ The axiom (R1) is  satisfied for $\Vdash$ and $\dashV$ by $1.$ and $14.$ whenever they are satisfied for $\vdash$ and $\dashv.$ Also  the axioms (L2) and (R2) are  satisfied for $\Vdash$ and $\dashV$ by $12.$ and $13.$ whenever they are satisfied for $\vdash$ and $\dashv.$ For c), the axioms (L1) and (L2) are  satisfied for $\Vdash$ and $\Perp$ by $5.,6.,7.$ and $8.$ whenever they are satisfied for $\vdash$ and $\dashv.$  For d), It remains to verify (T4).  It is verified by $9.$ and $10.$ whenever (R1) holds for $\vdash$ and $\dashv.$
\end{proof}

\section{Trigroups}
In this section, we introduce the notion of trigroups and study several properties on the conjugation operation on them. 

\begin{De}
A  trisemigroup $A$ is a trimonoid if there exists an element $1\in A$ such that 
  $$1\vdash x= x=x\dashv 1~~\mbox{for all }~x\in A.~~~~~~~~~~~~~~~~~~(I)$$
Note that the distinguish element $1\in A$ satisfying $(I)$ may not be unique. Set $$\LieU_A:=\{e\in A: e\vdash x=x=x\dashv e~~\mbox{for all}~x,y\in A\}.$$ This set is referred to as the set of bar-units in $A.$

A  trimonoid  is a trigroup if  
 for all $x\in A,$ there exists $x^{-1}\in A$ (called inverse of $x$) such that $$x\vdash x^{-1}=1=x^{-1}\dashv x~~\mbox{and} ~~x\perp x^{-1}=1=x^{-1}\perp x.$$

A morphism between two trigroups is a map that preserves the 3 binary operations and is compatible with bar-units and inverses.
%A trigoup is said to be a strong trigroup if $x\perp x^{-1}=1=x^{-1}\perp x$ and $1\perp x=x= x\perp1.$
\end{De}

\begin{Rem}
\noindent
\begin{itemize}
\item[a)] If $(A, \vdash,\perp,\dashv)$ is a trigroup, then $(A, \vdash,\dashv)$ is a digroup. In other words, there is a forgetful functor $Trig\to Dig$ from the category of trigroups to the category of digroups. 
\item[b)] $e$ is a bar-unit in $(A, \vdash,\perp,\dashv)$ if and only if $e$ is a bar-unit in the underlying digroup $(A, \vdash,\dashv).$

\item[c)] If $\vdash=\perp=\dashv$, then we simply have a group. So we get a functor from the category of groups to the category of trigroups. Moreover, we may regard the trivial group as a trigroup, and thus
 a zero object in the category of trigroups.
\end{itemize}
\end{Rem}
\begin{Ex} \label{ExTri}
Let $M$ be a set  and  $H$  a group acting  on the left of $M.$ Suppose that there exists  $e\in M$ satisfying  $he = e$ for all $h \in H,$ and  $H$ acts transitively on $M-\{e\}.$ Now define on  $A := M \times H$ , 
%Let $M(\mathbb{K},n)$ be the ring of $n\times n$ matrices with coefficients in $\mathbb{K}.$ Denote respectively by $Z(\mathbb{K},n)$  and $I(\mathbb{K},n)$ the center  of $M(\mathbb{K},n)$ and the set of non singular matrices in $M(\mathbb{K},n).$
%Let $M(\mathbb{K},n)$ be the ring of $n\times n$ matrices with coefficients in $\mathbb{K}.$ Denote respectively by $Z(\mathbb{K},n)$  and $I(\mathbb{K},n)$ the center  of $M(\mathbb{K},n)$ and the set of non singular matrices in $M(\mathbb{K},n).$
 the following binary operations:
\begin{enumerate}
\item[i)]  $(u,h){\vdash} (v,k)=(hv,hk)$
\item[ii)]  $(u,h){\dashv} (v,k)=(u,hk)$
\item[iii)]  $(u,h){\perp} (v,k)=(e,hk)$
\end{enumerate}
for all $u,v\in M$ and $h,k\in H.$ One readily shows that $(A,\vdash,\perp,\dashv)$ is a trigroup with distinguish bar-unit  $(e,1)$ in which $(e,h^{-1})$ is the inverse of $(u,h).$ 
\end{Ex}

\begin{Ex}
Let $Z(\mathbb{K},n)$ be the center of $GL(n,\mathbb{K}),$  the general linear group of degree $n$ with coefficients in  $\mathbb{K}.$ 
%Let $M(\mathbb{K},n)$ be the ring of $n\times n$ matrices with coefficients in $\mathbb{K}.$ Denote respectively by $Z(\mathbb{K},n)$  and $I(\mathbb{K},n)$ the center  of $M(\mathbb{K},n)$ and the set of non singular matrices in $M(\mathbb{K},n).$
%Let $M(\mathbb{K},n)$ be the ring of $n\times n$ matrices with coefficients in $\mathbb{K}.$ Denote respectively by $Z(\mathbb{K},n)$  and $I(\mathbb{K},n)$ the center  of $M(\mathbb{K},n)$ and the set of non singular matrices in $M(\mathbb{K},n).$
Define on $A:=\mathbb{K}^n\times Z(n,\mathbb{K})$ the following binary operations:
\begin{enumerate}
\item[i)]  $(x,M){\vdash} (y,N)=(My,MN)$
\item[ii)]  $(x,M){\dashv} (y,N)=(x,MN)$
\item[iii)]  $(x,M){\perp} (y,N)=(0,MN)$
\end{enumerate}
for all $x,y\in \mathbb{K}^n$ and $M,N\in Z(n,\mathbb{K}).$ Then by Example \ref{ExTri}, $(A,\vdash,\perp,\dashv)$ is a trigroup with distinguish bar-unit  $(0,I_n)$ in which $(0, M^{-1})$ is the inverse of $(x,M),$ where $I_n$ is the identity matrix.
\end{Ex}

%\begin{Le}\label{te}Let $(A,\vdash,\perp,\dashv)$  be a trisemigroup, and let $e\in A$  such that $e\perp x=x\perp e=x$ for all $x\in A.$ Then $e\dashv x= x\vdash e$ for all $x\in A.$\end{Le}
%\begin{Le}A trimonoid $(A,\vdash,\perp,\dashv)$  is a  trigroup if and only if  for all $x\in A,$ there exists $x^{-1}\in A$ satisfying  $x\perp x^{-1}=x^{-1}\perp x=1.$\end{Le}

\begin{Le}\label{inverse}
Let $(A,\vdash,\perp,\dashv )$  be a trigroup. 
Then the following is true:
\begin{enumerate}

\item[1)]  $x\vdash 1=1\perp x=x\perp 1=1\dashv x= (x^{-1})^{-1}$ for all $x\in A.$
\item[3)] $(x\perp y)^{-1}=y^{-1}\perp x^{-1}$ for all $x,y\in A.$ 
\item[4)]  $(x\vdash y)^{-1}=y^{-1}\vdash x^{-1}=y^{-1}\dashv x^{-1}=(x\dashv y)^{-1}$ for all $x,y\in A.$   Consequently, $((x^{-1})^{-1})^{-1}=x^{-1}.$ 
\item[2)] $x^{-1}\vdash x\vdash y=x\vdash x^{-1}\vdash y=y~~$ for all $x,y\in A.$ 

\item[5)]  The set $J=\{x^{-1}: x\in A\}$ is a group in which $\vdash=\perp=\dashv.$ This produces a functor from the category of trigroups to the category of groups.

\item[6)]  The mapping  $\phi: A\to J$ defined by $x\mapsto (x^{-1})^{-1}$ is an epimorphism of trigroups that fixes $J$, and $\ker\phi=\LieU_A.$ %$\{e\in A: e\vdash x=1\perp x=x= x\perp e= x\dashv e ~\mbox{for all}~~ x\in A\}.$

\end{enumerate}
\end{Le}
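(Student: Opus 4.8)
The plan is to organize everything around the \emph{bar map} $e\colon A\to A,\ e(x)=x\vdash 1$, and to prove 1) first, since the remaining parts are read off from it. The one external fact I would use is that the digroup inverse is unique (\cite{MK}): if $w\vdash a=1=a\dashv w$ and $w\vdash b=1=b\dashv w$, then by (R1) for $(A,\vdash,\dashv)$ and associativity of $\dashv$ one has $a=a\dashv(w\vdash b)=(a\dashv w)\dashv b=1\dashv b=e(b)$, and symmetrically $b=e(a)$; since $e$ is idempotent ($e(e(y))=(y\vdash 1)\vdash 1=y\vdash(1\vdash 1)=e(y)$) this gives $a=e(b)=e(e(a))=e(a)=b$. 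Hence to prove any identity of the form $z=t^{-1}$ it suffices to check $t\vdash z=1=z\dashv t$. For 1) itself, $x\vdash 1=1\dashv x$ is (R2) applied to $(x\vdash x^{-1})\dashv x=x\vdash(x^{-1}\dashv x)$. To identify $e(x)$ with $(x^{-1})^{-1}$ I would verify that $e(x)$ inverts $x^{-1}$: (L1) gives $x^{-1}\vdash(x\vdash 1)=(x^{-1}\dashv x)\vdash 1=1$, and associativity of $\dashv$ with (R1) gives $(1\dashv x)\dashv x^{-1}=1\dashv(x\dashv x^{-1})=1\dashv(x\vdash x^{-1})=1$. The two $\perp$-conjugates are then pinned to $e(x)$ by one instance of (L2) for $(A,\vdash,\perp)$, namely $1\perp x=(x\vdash x^{-1})\perp x=x\vdash(x^{-1}\perp x)=x\vdash 1=e(x)$, after which associativity of $\perp$ with $x\perp x^{-1}=1=x^{-1}\perp x$ yields $x\perp 1=(x\perp x^{-1})\perp x=1\perp x$.

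Part 2) is immediate: $x^{-1}\vdash(x\vdash y)=(x^{-1}\dashv x)\vdash y=y$ by (L1), and $x\vdash(x^{-1}\vdash y)=(x\vdash x^{-1})\vdash y=y$ by associativity of $\vdash$. For 3) and 4) I would exhibit the reversed element and check the two inverse equations. For 3), using (L2) then (L1) for $(A,\vdash,\perp)$ and then Part 1), $(x\perp y)\vdash(y^{-1}\perp x^{-1})=\bigl((x\perp y)\vdash y^{-1}\bigr)\perp x^{-1}=\bigl(x\vdash(y\vdash y^{-1})\bigr)\perp x^{-1}=(x\vdash 1)\perp x^{-1}=(x^{-1})^{-1}\perp x^{-1}=1$, and dually $(y^{-1}\perp x^{-1})\dashv(x\perp y)=1$ via (R2),(R1) for $(A,\perp,\dashv)$. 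The same pattern inside the digroup gives $(x\vdash y)^{-1}=y^{-1}\vdash x^{-1}$ and $(x\dashv y)^{-1}=y^{-1}\dashv x^{-1}$; specializing the former to $y=1$ yields $((x^{-1})^{-1})^{-1}=(x\vdash 1)^{-1}=1^{-1}\vdash x^{-1}=x^{-1}$, the ``consequently''. In particular, by the consequently every $a\in J=\{x^{-1}:x\in A\}$ satisfies $e(a)=a$, so $a\vdash 1=a=1\dashv a$ and (by Part 1) $a\perp 1=a=1\perp a$; that is, $1$ is a two-sided unit on $J$ for all three products. Then for $a,b\in J$, $(a\vdash 1)\dashv b=a\vdash(1\dashv b)$ (by (R2)) gives $a\dashv b=a\vdash b$ and $(a\vdash 1)\perp b=a\vdash(1\perp b)$ (by (L2)) gives $a\perp b=a\vdash b$, which closes the last equality of 4).

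For 5) it then remains to note that $J$ is closed ($x^{-1}\vdash y^{-1}=(y\vdash x)^{-1}\in J$ by 4)), inherits associativity, contains the unit $1$, and has inverses ($(x^{-1})^{-1}$ inverts $x^{-1}$), so $(J,\vdash)=(J,\perp)=(J,\dashv)$ is a group; a trigroup morphism preserves inverses, hence carries $J$ into $J$ and restricts to a group homomorphism, giving the functor. For 6), $\phi=e$ lands in $J$ and fixes $J$ pointwise, hence is surjective, hence epi; it preserves each product because, for instance, $\phi(x\vdash y)=(x\vdash y)\vdash 1=x\vdash(y\vdash 1)=(x\vdash 1)\vdash(y\vdash 1)=\phi(x)\vdash\phi(y)$, with the $\perp$- and $\dashv$-cases handled identically using (L1) for $(A,\vdash,\perp)$ and the form $e(x)=1\dashv x$; moreover $\phi(1)=1$ and $\phi(x^{-1})=x^{-1}=\phi(x)^{-1}$ by the consequently. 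Finally $\ker\phi=\phi^{-1}(1)=\{x:x\vdash 1=1\}$. The inclusion $\LieU_A\subseteq\ker\phi$ is clear; for the reverse, $x\vdash 1=1$ forces $(x^{-1})^{-1}=1$ and hence $x^{-1}=((x^{-1})^{-1})^{-1}=1$, so by Part 2) $x\vdash y=x\vdash(x^{-1}\vdash y)=y$, while $(y\dashv x^{-1})\dashv x=y\dashv(x^{-1}\dashv x)=y$ forces $y\dashv x=y$; thus $x\in\LieU_A$.

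The genuinely substantive step is 1): once $e$ is idempotent and inverses are unique, every later assertion becomes a verification that a proposed element satisfies the (unique) inverse equations, carried out with the disemigroup relations. I expect the two delicate points to be (i) pinning the two $\perp$-conjugates $x\perp 1$ and $1\perp x$ to $e(x)$, which forces one to combine (L2) of $(A,\vdash,\perp)$ with the bar-inverse relations $x\perp x^{-1}=1=x^{-1}\perp x$ in exactly the right instance, and (ii) the reverse kernel inclusion in 6), whose only non-formal input is the collapse $x^{-1}=1$ extracted from the consequently. Everything else is bookkeeping with the eleven relations (T1)--(T4).
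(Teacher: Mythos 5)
Your proof is correct, and while its skeleton (prove 1) first, then read everything else off it) matches the paper's, your route through the details is genuinely different and in places tighter. The paper leans on Kinyon's digroup lemmas (\cite[Lemmas 4.3, 4.5]{MK}) as black boxes, and, when proving identities like $(x\perp y)^{-1}=y^{-1}\perp x^{-1}$ or $\perp=\vdash$ on $J$, verifies \emph{all four} trigroup-inverse equations (the two digroup ones and the two $\perp$ ones) for each candidate. You instead prove the uniqueness of the digroup inverse from scratch via your bar map $e(x)=x\vdash 1$ (the computation $a=a\dashv(w\vdash b)=(a\dashv w)\dashv b=1\dashv b=e(b)$ together with idempotence of $e$ is correct), and observe that since the trigroup inverse is in particular a digroup inverse, the $\perp$-conditions come for free once $t\vdash z=1=z\dashv t$ is checked. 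This halves the verifications in 3) and 4), and in 5) replaces the paper's four inverse-equation computations showing $x^{-1}\perp y^{-1}=(y\vdash x)^{-1}=x^{-1}\vdash y^{-1}$ with your two-line argument $(a\vdash 1)\dashv b=a\vdash(1\dashv b)$ and $(a\vdash 1)\perp b=a\vdash(1\perp b)$ using that $1$ is a genuine two-sided unit on $J$. Your kernel argument in 6) also differs slightly: you extract $x^{-1}=1$ from $x\vdash 1=1$ via the ``consequently'' and then invoke 2), whereas the paper works directly with $(x\vdash 1)\vdash y$ and $y\dashv(1\vdash x)$; both are sound, and your inclusion $\LieU_A\subseteq\ker\phi$ (immediate from $e\vdash 1=1$) is cleaner than the paper's detour through bar-units of $J$. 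What your approach buys is self-containedness (no citation of \cite{MK} needed) and a justification that $x\mapsto x^{-1}$ is actually well defined, which the lemma's statement tacitly assumes; what the paper's buys is shorter text by outsourcing the digroup theory. Two glossed points you should make explicit: $1^{-1}=1$ (needed when you specialize 4) to $y=1$; it follows instantly from your uniqueness lemma since $1\vdash 1=1=1\dashv 1$), and the ``same pattern'' claim for $(x\dashv y)^{-1}=y^{-1}\dashv x^{-1}$, whose verification $(x\dashv y)\vdash(y^{-1}\dashv x^{-1})=x\vdash(1\dashv x^{-1})=(1\dashv x)\dashv x^{-1}=1\dashv(x\vdash x^{-1})=1$ does go through but needs (R1) and part 1) in a slightly different arrangement than the $\vdash$ case. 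These are bookkeeping, not gaps.
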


\begin{proof}
 Let $x\in A.$ Then 
 $$x\vdash 1=x\vdash (x^{-1}\perp x) \stackrel{L2}=(x\vdash x^{-1})\perp x=1\perp x .$$ Similarly, one proves  $1\dashv x=x\perp 1.$ In particular, $1\perp 1=1\vdash 1=1\dashv 1=1.$ Also,
 
 $
 \begin{aligned}
 (x\perp 1)\perp x^{-1}&=(x\perp (x^{-1}\dashv x)) \perp x^{-1}\stackrel{R2}=((x\perp x^{-1})\dashv x) \perp x^{-1}\\&=(1\dashv x )\perp x^{-1}\stackrel{T4}=1\perp (x\vdash x^{-1})= 1\perp 1=1.
 \end{aligned} $
 
 Similarly, one proves  that $x^{-1}\perp (1\perp x)=1.$  
Therefore, 1) follows by \cite[Lemma 4.3(2)]{MK} and  \cite[Lemma 4.5(1)]{MK} . 
%To show that $(x\vdash 1)\perp x^{-1}=1.$ 
2) follows from \cite[Lemma 4.5]{MK} since $(A,\vdash,\dashv)$ is a digroup.  To prove 3), let $x,y\in A.$ Then 
 $$(y^{-1}\perp x^{-1})\perp(x\perp y)\stackrel{ass}=(y^{-1}\perp (x^{-1}\perp x))\perp y \stackrel{ass}=y^{-1}\perp (1\perp y) \stackrel{1)}=1.
$$
    Similarly, one proves that $(x\perp y)\perp(y^{-1}\perp x^{-1})=1$ . 
Also 

 $
 \begin{aligned}
(x\perp y)\vdash(y^{-1}\perp x^{-1})&\stackrel{L1}=x\vdash (y\vdash(y^{-1}\perp x^{-1}))\stackrel{L2}=x\vdash ((y\vdash y^{-1})\perp x^{-1}))\\&=x\vdash(1\perp x^{-1})\stackrel{L2}=(x\vdash 1)\perp x^{-1}\stackrel{ 1)}=(1\dashv x )\perp x^{-1}\\&\stackrel{T4}=1\perp (x\vdash x^{-1})= 1\perp 1=1. \end{aligned} $

Similarly, one shows that $(y^{-1}\perp x^{-1})\dashv(x\perp y)=1.$\\
For 4), note that by \cite[Lemma 4.5(2)]{MK}, It is enough to show that  \\$(y^{-1}\vdash x^{-1})\perp (x\vdash y)=1$ and $(x\dashv y)\perp(y^{-1}\dashv x^{-1}) =1.$ Indeed, 

 $
 \begin{aligned}
 (y^{-1}\vdash x^{-1})\perp (x\vdash y)&\stackrel{L2}=y^{-1}\vdash (x^{-1}\perp (x\vdash y))\stackrel{T4}=y^{-1}\vdash ((x^{-1}\dashv x)\perp y))\\&=y^{-1}\vdash (1\perp y))\stackrel{1)}=y^{-1}\vdash (y\vdash 1))\stackrel{L1}=(y^{-1}\perp y)\vdash1\\&=1\vdash 1=1.\end{aligned} $
 
  The proof that  $(x\dashv y)\perp(y^{-1}\dashv x^{-1}) =1$ is similar.  The consequence above mentioned follows as $((x^{-1})^{-1})^{-1}=(x\vdash 1)^{-1}=1\vdash x^{-1}=x^{-1}$ due to 1). 
\\
For 5), note that  since $(A,\vdash,\dashv)$ is a digroup, it follows by  \cite[Lemm 4.5]{MK} that $J$ is a group in which $\vdash=\dashv.$ It remains to show that $\perp=\vdash.$ Indeed, for all $x,y\in A,$ 
 
 $
 \begin{aligned}
 (x^{-1}\perp y^{-1})\dashv (y\vdash x)&\stackrel{T4}= (x^{-1}\perp y^{-1})\dashv (y\dashv x) \stackrel{ass}= ((x^{-1}\perp y^{-1})\dashv y)\dashv x\\& \stackrel{R2}=(x^{-1}\perp (y^{-1}\dashv y))\dashv x=(x^{-1}\perp 1)\dashv x\\& \stackrel{R2}=x^{-1}\perp (1\dashv x)\\&=1 ~~\mbox{ since} ~~(x^{-1})^{-1}=x\vdash 1~~\mbox{ by Lemma} ~\ref{inverse}(1).
 \end{aligned}$

Also,
 
  $
 \begin{aligned}
 (y\vdash x)\vdash (x^{-1}\perp y^{-1})& \stackrel{L2}= ( (y\vdash x)\vdash x^{-1})\perp y^{-1} \stackrel{ass}= ( (y\vdash (x\vdash x^{-1}))\perp y^{-1}\\&= (y\vdash 1)\perp y^{-1}\\&=1 ~~\mbox{ since} ~~(y^{-1})^{-1}=y\vdash 1~~\mbox{ by Lemma} ~\ref{inverse}(1).
 \end{aligned}$ 
 
Moreover,
 
  $
 \begin{aligned}
 (y\vdash x)\perp(x^{-1}\perp y^{-1})& \stackrel{ass}= ( (y\vdash x)\perp x^{-1})\perp y^{-1} \stackrel{L2}= ( (y\vdash (x\perp x^{-1}))\perp y^{-1}\\&= (y\vdash 1)\perp y^{-1}=1~~\mbox{ by Lemma} ~\ref{inverse}(1). \end{aligned}$ 
 
Finally,
 
  $
 \begin{aligned}
 (x^{-1}\perp y^{-1})\perp(y\vdash x)& \stackrel{ass}= x^{-1}\perp (y^{-1}\perp(y\vdash x))\stackrel{T4}= x^{-1}\perp ((y^{-1}\dashv y)\perp x)\\&=x^{-1}\perp (1\perp x)\\&=1 ~~~\mbox{ since} ~~(x^{-1})^{-1}=x\vdash 1=1\perp x~~\mbox{ by Lemma} ~\ref{inverse}(1).
 \end{aligned}$ 
 
  So $x^{-1}\perp y^{-1}= (y\vdash x)^{-1}=x^{-1}\vdash y^{-1}.$ Therefore $\perp=\vdash$ in $J.$ \\
For 6), it is clear that for $\star\in\{\vdash,\perp,\dashv\},$ and for all $x,y\in A$, we have  
   $$
\phi(x\star y)= ((x\star y)^{-1})^{-1}=(y^{-1}\star x^{-1})^{-1}=(x^{-1})^{-1}\star (y^{-1})^{-1}=  \phi(x)\star  \phi( y),$$ and 
$$
 \begin{aligned}\phi(1)&=\phi(x\vdash x^{-1})= \phi(x)\vdash \phi(x^{-1})=(x^{-1})^{-1}\vdash ((x^{-1})^{-1})^{-1}\\&\stackrel{4)}=(x^{-1})^{-1}\vdash x^{-1}=(x\vdash x^{-1})=(1)^{-1}=1.\end{aligned}$$
  % \end{aligned} $$  
  Moreover, it is clear that $\phi(x^{-1})=((x^{-1})^{-1})^{-1}=(\phi(x))^{-1}.$ So $\phi$ is a trigroup homomorphism.
 $\phi$ is onto because $\phi(J)=J,$ since  for any $y:=x^{-1}\in J,$ we have  $\phi(y)= (y^{-1})^{-1}=((x^{-1})^{-1})^{-1} \stackrel{1)}=x^{-1}=y.$ In addition, we have for all $e\in \LieU_A,$ 
%Now, we prove that $\LieU_A\subseteq \ker\phi.$  Indeed, let $e\in \LieU_A.$ Then for all $x\in A,$ we have
$$ \phi(e)\vdash y=\phi(e)\vdash x^{-1}=\phi(e)\vdash \phi(x^{-1})=\phi(e\vdash x^{-1})=\phi(x^{-1})=x^{-1}=y$$ and $$y\dashv  \phi(e)=x^{-1}\dashv  \phi(e)= \phi(x^{-1})\dashv  \phi(e)=\phi(x^{-1}\dashv e)=\phi(x^{-1})=x^{-1}=y.$$ So $\phi(e)\in\LieU_J$ i.e. $\phi(e)=1$ since $J$ is a group. Therefore $\LieU_A\subseteq \ker\phi.$ 
 %$$ ((x^{-1})^{-1})^{-1}\\stackrel{1)}=(x^{-1})^{-1}\vdash x^{-1}=(x\vdash x^{-1})=(1)^{-1}=1$$
Now let $x\in\ker\phi,$ i.e  $\phi(x)=1. $ Then by $1),$ $1\dashv x=x\vdash 1=(x^{-1})^{-1}=1.$ So for all $y\in A,$ we have 
 $$x\vdash y=x\vdash(1\vdash y)\stackrel{ass}=(x\vdash 1)\vdash y=1\vdash y=y,$$ and  $$y\dashv x=y\dashv(1\vdash x)\stackrel{R1}=y\dashv(1\dashv x)=y\dashv 1=y.$$ It follows that  $x\in\LieU_A.$ Therefore  $\ker\phi\subseteq\LieU_A.$ This completes the proof.
\end{proof}

\begin{Ex}
Consider the trigroup $A:=M\times H$ of Example \ref{ExTri} . Then  $J=\{e\}\times H$ and $\LieU_A=M\times \{1\}$ since   for  all $u\in M$ and $h\in H,$ we have 
$(u,1)\vdash (v,k)=(v,k),~$ $(v,k)\dashv (u,1)=(v,k)$ and  $(e,k^{-1})$ is the inverse of $(v,k)$ for all $u\in M$ and $(v,k)\in A.$ 

\end{Ex}

Let $(A,\vdash,\perp,\dashv)$  be a trigroup, and consider the ternary operation \\$[-,-,-]: A\times A\times A\to A$ defined by $[x,y,z]=(x\perp y)\vdash z\dashv (y^{-1}\perp x^{-1}).$  This operation is a generalization of the conjugation on digroups \cite[Equation (13)]{MK} to trigroups.

\begin{Le}\label{xx1}
Let $(A,\vdash,\perp,\dashv)$  be a trigroup. 
Then the following is true:
\begin{enumerate}

\item[1)]  $e^{-1}\in \LieU_A$ for all $e\in\LieU_A.$
\item[2)] $[x,y,1]=1$ for all  $x,y\in A.$ 

\item[3)] $[e_1,e_2,x]=x$ for all $e_1,e_2\in \LieU_A$ and $x\in A.$ 
 \item[4)] For all $x, y\in A,$ the map $A\stackrel{[x,y,-]}\longrightarrow A$ which associates $[x,y,z]$ to any $z\in A$   is an epimorphism of the underlying trimonoid A.
%\item[5)]  $[x_1,x_2,y\dashv z]=[x_1,x_2,y]\dashv [x_1,x_2, z]$ for all $x_1,x_2,y,z\in A.$
%\item[6)]  $[x_1,x_2,y\vdash z]=[x_1,x_2,y]\vdash [x_1,x_2, z]$ for all $x_1,x_2,y,z\in A.$
%\item[7)]  $[x_1,x_2,y\perp z]=[x_1,x_2,y]\perp [x_1,x_2, z]$ for all $x_1,x_2,y,z\in A.$
%\item  $[x_1,x_2,[y_1,y_2,y_3]]=\big[[x_1,x_2,y_1],[x_1,x_2,y_2],[x_1,x_2,y_3]\big]$ for all  $x_1,x_2,y_1,y_2,y_3\in A.$

%\item.  The set $J=\{x^{-1}: x\in A\}$ is a group in which $\vdash=\perp=\dashv.$

%\item  The mapping  $\phi: A\to J$ defined by $x\mapsto x^{-1}$ is an epimorphism of trigroups, and $\ker\phi=\{e\in A: e\vdash x=1\perp x=x= x\perp e= x\dashv e ~\mbox{for all}~~ x\in A\}.$

\end{enumerate}
\end{Le}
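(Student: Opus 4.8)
The plan is to reduce everything to the digroup-style conjugation. Writing $a:=x\perp y$ and invoking Lemma~\ref{inverse}(3) to identify $y^{-1}\perp x^{-1}=(x\perp y)^{-1}=a^{-1}$, the bracket becomes $[x,y,z]=a\vdash z\dashv a^{-1}$, so that $[x,y,-]$ is conjugation by $a$ in the underlying digroup. For 1), I argue directly from the two halves of the bar-unit condition: if $e\in\LieU_A$ then $e\vdash e^{-1}=e^{-1}$ since $e$ is a left bar-unit for $\vdash$, whereas $e\vdash e^{-1}=1$ by the inverse axiom; hence $e^{-1}=1$, which lies in $\LieU_A$. For 2), substituting $z=1$ gives $[x,y,1]=a\vdash 1\dashv a^{-1}$; by Lemma~\ref{inverse}(1) we have $a\vdash 1=(a^{-1})^{-1}$, and then $(a^{-1})^{-1}\dashv a^{-1}=1$ is the inverse axiom applied to $a^{-1}$.

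For 3), item 1) gives $e_1^{-1}=e_2^{-1}=1$, so the right factor is $e_2^{-1}\perp e_1^{-1}=1\perp 1=1$ and $[e_1,e_2,x]=(e_1\perp e_2)\vdash x\dashv 1=(e_1\perp e_2)\vdash x$; the (L1) relation of the left disemigroup $(A,\vdash,\perp)$ rewrites this as $e_1\vdash(e_2\vdash x)$, which equals $x$ because each $e_i$ is a left bar-unit for $\vdash$. For 4), set $\psi:=[x,y,-]=a\vdash(-)\dashv a^{-1}$; preservation of the bar-unit is exactly 2). To show $\psi$ respects $\vdash$ and $\dashv$, I expand $\psi(z)\star\psi(w)$, push the outer $a$ and $a^{-1}$ to the extremes using associativity and (L1)/(L2), and cancel the inner pair: for $\vdash$ this cancellation is the left-cancellation $a^{-1}\vdash a\vdash(-)=(-)$ of Lemma~\ref{inverse}(2), while for $\dashv$ it is $a^{-1}\dashv a=1$; both chains reassemble into $a\vdash(z\star w)\dashv a^{-1}=\psi(z\star w)$.

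The $\perp$-case is handled separately. Starting from $\psi(z)\perp\psi(w)=\big((a\vdash z)\dashv a^{-1}\big)\perp\big(a\vdash w\dashv a^{-1}\big)$, I apply (T4) to turn $(u\dashv a^{-1})\perp Q$ into $u\perp(a^{-1}\vdash Q)$, with $u=a\vdash z$; the factor $a^{-1}\vdash(a\vdash w\dashv a^{-1})$ collapses to $w\dashv a^{-1}$ by Lemma~\ref{inverse}(2), leaving $(a\vdash z)\perp(w\dashv a^{-1})$. Then (L2) for $(A,\vdash,\perp)$ pulls out the leading $a\vdash$, and (R2) for $(A,\perp,\dashv)$ pushes $w\dashv a^{-1}$ together with $z$ into $(z\perp w)\dashv a^{-1}$, giving $a\vdash\big((z\perp w)\dashv a^{-1}\big)=\psi(z\perp w)$. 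Finally, surjectivity is witnessed by the explicit preimage $z_0:=a^{-1}\vdash b\dashv (a^{-1})^{-1}$ of an arbitrary $b\in A$: Lemma~\ref{inverse}(2) gives $a\vdash z_0=b\dashv(a^{-1})^{-1}$, and then $(a^{-1})^{-1}\dashv a^{-1}=1$ yields $\psi(z_0)=b$.

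I expect the $\perp$-homomorphism identity to be the main obstacle. Unlike the $\vdash$ and $\dashv$ cases, it does not reduce to a single associativity-plus-cancellation step, and the order of moves is delicate: one must apply (T4) first to convert the trailing $\dashv a^{-1}$ of the left factor into a $\vdash a^{-1}$ acting on the right factor, simplify by the left-cancellation of Lemma~\ref{inverse}(2), and only afterwards invoke the two $\perp$-compatibility relations (L2) and (R2); attempting the compatibility relations first stalls the computation.
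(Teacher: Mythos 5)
Your proof is correct, and in several places it takes a genuinely different (and tighter) route than the paper's. For 1), the paper verifies directly that $e^{-1}$ acts as a bar-unit, computing $e^{-1}\vdash z=z$ and $z\dashv e^{-1}=z$ by inserting $1\dashv e$; you instead extract the stronger fact $e^{-1}=1$ from the clash between the bar-unit identity $e\vdash e^{-1}=e^{-1}$ and the inverse axiom $e\vdash e^{-1}=1$. This is shorter, it is consistent with the paper's Example (the inverse of $(u,1)$ is $(e,1)$), and it collapses 3) as well: the right factor becomes $1\perp 1=1$ (justify this either by your 1) applied to $e=1$ or by the identity $1\perp 1=1$ established inside the proof of Lemma~\ref{inverse}(1)), so only (L1) and the bar-unit property remain, whereas the paper runs an L1/R1/associativity chain that also leans on its version of part 1). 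In 4), your $\perp$-case contracts $\psi(z)\perp\psi(w)$ by applying (T4) first and then cancelling with Lemma~\ref{inverse}(2), while the paper goes the other direction, expanding $[x_1,x_2,y\perp z]$ by inserting $1=\theta^{-1}\perp\theta$ and using (L1), (T4), (L2), (R2); the two are equivalent in substance, and your remark that (T4) must precede the compatibility relations is accurate. Most notably, your surjectivity argument is more complete than the paper's: the paper asserts that ontoness ``follows by 3)'', but 3) only shows $[e_1,e_2,-]$ is the identity when $e_1,e_2$ are bar-units; for arbitrary $x,y$ one needs an explicit preimage, which you supply ($z_0=a^{-1}\vdash b\dashv(a^{-1})^{-1}$, correctly verified via Lemma~\ref{inverse}(2) and $(a^{-1})^{-1}\dashv a^{-1}=1$) and which the paper itself only produces later, when proving axiom (3r2) of the 3-rack Proposition with the preimage $\theta^{-1}\vdash b\dashv\theta$.

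One small repair: in the $\dashv$-homomorphism case your declared toolkit (associativity, (L1)/(L2), and the cancellation $a^{-1}\dashv a=1$) does not quite suffice as stated. The inner pair presents itself as $a^{-1}\dashv(a\vdash V)$, and after cancelling you are left with a term of the form $u\dashv(1\dashv V)$; both steps require (R1), that is $x\dashv(y\vdash z)=x\dashv(y\dashv z)$, exactly as in the paper's computation i), which invokes (R1) twice. This is a one-word fix, not a gap: your $\vdash$-case as described (L1 read right-to-left, then the left cancellation of Lemma~\ref{inverse}(2)) genuinely avoids (R1), but the $\dashv$-case cannot.
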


\begin{proof}
To prove 1), let $z\in A$ and $e\in\LieU_A.$ Then 

 $
 \begin{aligned} 
   ~e^{-1}\vdash z &=e^{-1}\vdash (1\vdash z) =e^{-1}\vdash ((1\dashv e)\vdash z) \\&\stackrel{ass}=(e^{-1}\vdash (1\dashv e))\vdash z\\&=1\vdash z=z~~\mbox{ since} ~~(e^{-1})^{-1}=e\vdash 1~~\mbox{ by Lemma} ~\ref{inverse}(1).   \end{aligned}$
   
   Similarly, we show that $z\dashv e^{-1}=z.$  Therefore $e^{-1}\in\LieU_A.$

To prove 2), let $x,y\in A,$ and set $\theta=x_1\perp x_2.$ Then 

 $
 \begin{aligned} 
 ~[x,y,1]&=(x\perp y)\vdash 1\dashv (y^{-1}\perp x^{-1})=((x\perp y)\vdash 1)\dashv (x\perp y)^{-1}\\&=(\theta\vdash 1)\dashv\theta^{-1}=1~~\mbox{ since} ~~(\theta^{-1})^{-1}=\theta\vdash 1~~\mbox{ by Lemma} ~\ref{inverse}(1). \end{aligned}$ 
 
 To prove 3), let $z\in A$ and $e_1,e_2\in\LieU_A.$ Then 

 $
 \begin{aligned} 
   ~[e_1,e_2,z]&=(e_1\perp e_2)\vdash z\dashv (e_1^{-1}\perp e_2^{-1})=((e_1\perp e_2)\vdash z)\dashv (e_1^{-1}\perp e_2^{-1})\\&\stackrel{L1}=(e_1\vdash (e_2\vdash z))\dashv (e_1^{-1}\perp e_2^{-1})=(e_2\vdash z)\dashv (e_1^{-1}\perp e_2^{-1}) \\&=z\dashv (e_1^{-1}\perp e_2^{-1})\stackrel{R1}=z\dashv (e_1^{-1}\dashv e_2^{-1})\stackrel{L1}=
z\dashv e_1^{-1}\stackrel{1)}=z.  \end{aligned}$

 To prove 4), let  $x_1,x_2,y,z\in A$ and set $\theta=x_1\perp x_2.$ Then 
\begin{enumerate}
\item[i)]  
 $$
 \begin{aligned} 
 ~[x_1,x_2,y]\dashv [x_1,x_2, z] &=(\theta\vdash y\dashv\theta^{-1})\dashv(\theta\vdash z\dashv\theta^{-1})\\&\stackrel{L2}=\theta\vdash ((y\dashv\theta^{-1})\dashv(\theta\vdash (z\dashv\theta^{-1}))\\&\stackrel{ass}=\theta\vdash (y\dashv(\theta^{-1}\dashv(\theta\vdash (z\dashv\theta^{-1}))))\\&\stackrel{R1}=\theta\vdash (y\dashv(\theta^{-1}\dashv(\theta\dashv (z\dashv\theta^{-1}))))\\&\stackrel{ass}=\theta\vdash (y\dashv((\theta^{-1}\dashv\theta)\dashv (z\dashv\theta^{-1})))\\&=\theta\vdash (y\dashv(1\dashv (z\dashv\theta^{-1})))\\&\stackrel{R1}=\theta\vdash (y\dashv(1\vdash (z\dashv\theta^{-1})))\\&=\theta\vdash (y\dashv (z\dashv\theta^{-1}))\stackrel{ass}=\theta\vdash (y\dashv z)\dashv\theta^{-1}\\&=[x_1,x_2,y\dashv z]. \end{aligned}$$
 \item[ii)]  %Let $x_{1}, x_{1}, y \hspace{1mm}\mbox{and}\hspace{1mm} z \in A;$ we have
  $$
 \begin{aligned} 
 ~[x_1,x_2,y\vdash z] & =\theta\vdash (y\vdash z)\dashv\theta^{-1}\\ &= \theta\vdash ((y\dashv 1)\vdash z)\dashv\theta^{-1}\\&=\theta\vdash ((y\dashv (\theta^{-1} \dashv \theta))\vdash z)\dashv\theta^{-1}\\&\stackrel{ass}=\theta\vdash ((y\dashv \theta^{-1}) \dashv \theta)\vdash z)\dashv\theta^{-1}\\&\stackrel{L1}=\theta\vdash ((y\dashv \theta^{-1}) \vdash (\theta\vdash z))\dashv\theta^{-1}\\&\stackrel{Ass}=((\theta\vdash (y\dashv \theta^{-1})) \vdash (\theta\vdash z))\dashv\theta^{-1}\\&\stackrel{L2}=(\theta\vdash (y\dashv \theta^{-1})) \vdash ((\theta\vdash z)\dashv\theta^{-1})\\&=(\theta\vdash y\dashv \theta^{-1}) \vdash (\theta\vdash z\dashv\theta^{-1})\\&=[x_1,x_2,y]\vdash [x_1,x_2, z] . \end{aligned}.$$ 
\item[iii)] %we have for all $x_{1}, x_{1}, y \hspace{1mm}\mbox{and}\hspace{1mm} z \in A:$  
  $$ \begin{aligned} 
 ~[x_1,x_2,y\perp z] & =\theta\vdash (y\perp z)\dashv\theta^{-1}\\ &= \theta\vdash (y\perp(1\vdash z))\dashv\theta^{-1}\\&=\theta\vdash (y\perp((\theta^{-1} \perp \theta)\vdash z))\dashv\theta^{-1}\\&\stackrel{L1}=\theta\vdash (y\perp(\theta^{-1} \vdash (\theta\vdash z)))\dashv\theta^{-1} \\ &\stackrel{T4}=\theta\vdash ((y\dashv \theta^{-1}) \perp (\theta\vdash z))\dashv\theta^{-1} \\ &\stackrel{L2}=((\theta\vdash (y\dashv \theta^{-1})) \perp (\theta\vdash z))\dashv\theta^{-1} \\ &\stackrel{R2}=(\theta\vdash (y\dashv \theta^{-1})) \perp ((\theta\vdash z)\dashv\theta^{-1})  \\ &= [x_1,x_2,y]\perp [x_1,x_2, z] . \end{aligned}$$

\end{enumerate}

In addition  we have by 2) that  $[x,y,1]=1$ for all  $x,y\in A.$ That $[x,y,-]$ is onto follows by 3).

\end{proof}

\begin{Le} \label{xyz} Let $x_1,x_2,y_1,y_2,z\in A$ and set $\theta=x_1\perp x_2$

\begin{enumerate}
\item[1)]   $\theta\vdash z=[x_1,x_2,z]\dashv \theta.$
\item[2)]  $ \theta\vdash(y_1\perp y_2)=[x_1,x_2,y_1]\perp(\theta\vdash y_2).$
\item[3)]  $\big[x_1,x_2,[y_1,y_2,z]\big]=[t_1,t_2\dashv \theta,z]$ where $t_1=[x_1, x_2,y_1]$  and $t_2=[x_1, x_2,y_2]$ 

%\item[4)] $(x\perp y)\vdash z=[x,y,z]\vdash  x\perp y$ for all $x,y,z\in A.$ 
\end{enumerate}
\end{Le}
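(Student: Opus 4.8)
The plan is to treat the three parts in order, since parts (1) and (2) feed directly into (3). Throughout I abbreviate $\theta=x_1\perp x_2$ and record that $\theta^{-1}=x_2^{-1}\perp x_1^{-1}$ by Lemma~\ref{inverse}(3), so that the conjugation reads $[x_1,x_2,z]=\theta\vdash z\dashv\theta^{-1}$, exactly as in the proof of Lemma~\ref{xx1}.

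For (1), I would compute $[x_1,x_2,z]\dashv\theta=((\theta\vdash z)\dashv\theta^{-1})\dashv\theta$ and then collapse the right-hand tail: associativity of $\dashv$ turns this into $(\theta\vdash z)\dashv(\theta^{-1}\dashv\theta)$, the trigroup inverse relation gives $\theta^{-1}\dashv\theta=1$, and the bar-unit axiom $(I)$ gives $(\theta\vdash z)\dashv 1=\theta\vdash z$. This is the shortest of the three.

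For (2), the idea is to rewrite the right-hand side $[x_1,x_2,y_1]\perp(\theta\vdash y_2)=((\theta\vdash y_1)\dashv\theta^{-1})\perp(\theta\vdash y_2)$ and push it toward $\theta\vdash(y_1\perp y_2)$. Axiom (T4) converts the leading $\dashv$ into a $\vdash$ inside the second slot, yielding $(\theta\vdash y_1)\perp(\theta^{-1}\vdash(\theta\vdash y_2))$; then (L1) for the left disemigroup $(A,\vdash,\perp)$ contracts $\theta^{-1}\vdash(\theta\vdash y_2)=(\theta^{-1}\perp\theta)\vdash y_2=1\vdash y_2=y_2$, using $\theta^{-1}\perp\theta=1$ and the bar-unit. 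What remains is $(\theta\vdash y_1)\perp y_2$, which is exactly $\theta\vdash(y_1\perp y_2)$ by (L2) for the same left disemigroup.

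Part (3) is where the work concentrates, and I expect it to be the main obstacle---not because any single step is deep, but because the inverses must be organised correctly. Writing $\eta=y_1\perp y_2$, the inner bracket is $[y_1,y_2,z]=\eta\vdash z\dashv\eta^{-1}$. On the right-hand side, part (1) applied to $y_2$ gives $t_2\dashv\theta=\theta\vdash y_2$, and then part (2) gives $t_1\perp(t_2\dashv\theta)=[x_1,x_2,y_1]\perp(\theta\vdash y_2)=\theta\vdash\eta$. Thus the outer slots of $[t_1,t_2\dashv\theta,z]$ collapse to a single conjugator $\zeta=\theta\vdash\eta$; the crucial observation is that the inverse factor of this bracket, namely $(t_2\dashv\theta)^{-1}\perp t_1^{-1}$, equals $\zeta^{-1}=(\theta\vdash\eta)^{-1}$ by Lemma~\ref{inverse}(3), and Lemma~\ref{inverse}(4) rewrites this as $\eta^{-1}\dashv\theta^{-1}$. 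Hence $[t_1,t_2\dashv\theta,z]=(\theta\vdash\eta)\vdash z\dashv(\eta^{-1}\dashv\theta^{-1})$, an expression living entirely in $\vdash$ and $\dashv$. Finally I would match this against the left-hand side $[x_1,x_2,[y_1,y_2,z]]=\theta\vdash(\eta\vdash z\dashv\eta^{-1})\dashv\theta^{-1}$ using only associativity of $\vdash$, associativity of $\dashv$, and (L2) of the digroup $(A,\vdash,\dashv)$: both sides reduce to $((\theta\vdash(\eta\vdash z))\dashv\eta^{-1})\dashv\theta^{-1}$. The care needed is entirely in applying Lemma~\ref{inverse}(3)--(4) to turn the $\perp$-inverse of the conjugator into a $\dashv$-expression, after which the identity becomes a routine reassociation.
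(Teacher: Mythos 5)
Your proposal is correct and follows essentially the same route as the paper's proof: part (1) is the paper's computation verbatim, and parts (2) and (3) use exactly the same ingredients --- axiom (T4) combined with (L1)/(L2), Lemma~\ref{inverse}(3)--(4) to rewrite the $\perp$-inverse of the conjugator as $\eta^{-1}\dashv\theta^{-1}$, and the collapse of the conjugator of $[t_1,t_2\dashv\theta,z]$ to $\theta\vdash(y_1\perp y_2)$ via parts (1) and (2) --- merely run from the right-hand side toward the left instead of the reverse. There is no gap; the minor directional differences (e.g.\ cancelling $\theta^{-1}\vdash(\theta\vdash y_2)$ via (L1) and $\theta^{-1}\perp\theta=1$ where the paper inserts $1=\theta^{-1}\dashv\theta$) are cosmetic.
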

\begin{proof}

For 1) we have   

  $
 \begin{aligned} ~[x_1,x_2,z]\dashv \theta&= (\theta\vdash z\dashv\theta^{-1})\dashv \theta=((\theta\vdash z)\dashv\theta^{-1})\dashv \theta\\&\stackrel{ass}=(\theta\vdash z)\dashv(\theta^{-1}\dashv \theta)=(\theta\vdash z)\dashv 1=\theta\vdash z.\end{aligned}$
 
For 2), 

  $
 \begin{aligned}  
 \theta\vdash(y_1\perp y_2)&=  \theta\vdash((y_1\dashv 1)\perp y_2)=  \theta\vdash((y_1\dashv (\theta^{-1}\dashv \theta))\perp y_2)\\&\stackrel{ass}= \theta\vdash((y_1\dashv \theta^{-1})\dashv \theta)\perp y_2))\stackrel{T4}=\theta\vdash((y_1\dashv\theta^{-1})\perp (\theta\vdash y_2))\\&\stackrel{L2}=  (\theta\vdash(y_1\dashv\theta^{-1}))\perp (\theta\vdash y_2)=  [x_1,x_2,y_1]\perp (\theta\vdash y_2).\end{aligned}$

For 3),  

$
 \begin{aligned}  
 ~\big[x_1,x_2,[y_1,y_2,z]\big]&=(x_1\perp x_2)\vdash[y_1,y_2,z]\dashv(x_1\perp x_2)^{-1}\\&=\theta\vdash((y_1\perp y_2)\vdash z\dashv(y_1\perp y_2)^{-1})\dashv \theta^{-1} \\&\stackrel{ass}=  ( \theta\vdash(y_1\perp y_2))\vdash z\dashv((y_1\perp y_2)^{-1}\dashv \theta^{-1})   \\&\stackrel{}=  ( \theta\vdash(y_1\perp y_2))\vdash z\dashv ( \theta\vdash(y_1\perp y_2))^{-1}~\mbox{ by Lem.} ~\ref{inverse}(4)\\&\stackrel{2)}=  (t_1\perp(\theta\vdash y_2))\vdash z\dashv (t_1\perp(\theta\vdash y_2))^{-1}\\&\stackrel{1)}= (t_1\perp(t_2\dashv \theta))\vdash z\dashv (t_1\perp(t_2\dashv \theta))^{-1} =[t_1,t_2\dashv \theta,z].\end{aligned}$

  \end{proof}
 
\section{Relating Trigroups to  Leibniz 3-algebras}
Given a field $\mathbb{K}$ of characteristic different to 2, a Leibniz $3$-algebra \cite{CLP} is defined as a $\mathbb{K}$ -vector space $\g$ equipped with a trilinear operation $[-,-,-]: \g^{\otimes 3}\longrightarrow \g$ satisfying the identity 
$$\big[x_1,x_2 [y_1,y_2,y_3]\big]=\big[[x_1,x_2,y_1],y_2,y_3\big]+\big[y_1,[x_1,x_2,y_2], y_3,\big]+\big[y_1,y_2[x_1,x_2,y_3]\big].$$

Recall also from \cite[Definition 2.1]{BGR1} that a $3$-rack  $(R,[-,-,-])$ is a set $R$ endowed with a ternary operation $[-,-,-]: R\times R\times R\longrightarrow R$  such that 

\begin{enumerate}
\item[(3r1)] $\big[x_1,x_2,[y_1,y_2,z]\big]=\big[[x_1,x_2,y_1],[x_1,x_2,y_2],[x_1,x_2,z]\big]$ for all  $x_1,x_2,y_1,y_2,z\in R$
\item[(3r2)] For   $x,y,b\in R,$ there exists  a unique $z\in R$ such that $[x,y,z]=b.$
\end{enumerate}
If in addition there is a distinguish element  $1\in R$ such that\\ $\textit{(3r3)}~[1,1,z]=z~~\mbox{and}~~[x_1,x_2,1]=1~~\mbox{ for all}~~ x_1,x_2,z\in R,$ then $(R,[-,-,-],1)$ is said to be a pointed $3$-rack.

In the next proposition, we equip  a trigroup with a structure of  3-rack. This 
 provides a functor from the category of trigroups to the category of pointed $3$-racks, analogue to the functor from the category of digroups to the category of pointed racks studied in \cite{MK}.
\begin{Pro}
Let $(A,\vdash,\perp,\dashv)$  be a trigroup with distinguish bar-unit 1. Then $(A, [-,-,-])$ is a  3-rack pointed at 1, where the operation $[-,-,-]: A\times A\times A\to A$ is defined by $$[x,y,z]=(x\perp y)\vdash z\dashv (y^{-1}\perp x^{-1}).$$ 
\end{Pro}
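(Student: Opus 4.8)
The plan is to verify the three defining conditions (3r1), (3r2), (3r3) of a pointed 3-rack for the operation $[x,y,z]=(x\perp y)\vdash z\dashv(y^{-1}\perp x^{-1})$. The first observation is that the distinguished bar-unit $1$ lies in $\LieU_A$, since it satisfies $1\vdash x=x=x\dashv 1$ by $(I)$; this lets me import Lemma~\ref{xx1} with $e_1=e_2=1$. I will also use that, by Lemma~\ref{inverse}(3), the trailing factor rewrites as $y^{-1}\perp x^{-1}=(x\perp y)^{-1}$, so that with $\sigma:=x\perp y$ the operation takes the conjugation form $[x,y,z]=\sigma\vdash z\dashv\sigma^{-1}$.

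Conditions (3r3) are immediate: $[x_1,x_2,1]=1$ is exactly Lemma~\ref{xx1}(2), and $[1,1,z]=z$ is Lemma~\ref{xx1}(3) applied to $e_1=e_2=1\in\LieU_A$. For (3r2) I treat existence and uniqueness separately. Existence is free: Lemma~\ref{xx1}(4) says $[x,y,-]$ is an epimorphism of the underlying trimonoid and is in particular shown to be onto, so every $b$ is attained (explicitly one may take $z=(\sigma^{-1}\vdash b)\dashv\sigma$ and check, via $(L2)$, $\sigma\vdash\sigma^{-1}=1$, associativity and $(R1)$, that $[x,y,z]=b$). For uniqueness I use two cancellation facts in the digroup $(A,\vdash,\dashv)$: left $\vdash$-cancellation, which follows from $\sigma^{-1}\vdash(\sigma\vdash z)=z$ (Lemma~\ref{inverse}(2)), and right $\dashv$-cancellation, which follows from $(w\dashv\sigma^{-1})\dashv\sigma=w\dashv(\sigma^{-1}\dashv\sigma)=w\dashv 1=w$ using associativity of $\dashv$, the relation $\sigma^{-1}\dashv\sigma=1$, and $w\dashv 1=w$. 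Applying both to $\sigma\vdash z\dashv\sigma^{-1}=\sigma\vdash z'\dashv\sigma^{-1}$ forces $z=z'$.

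The substance is (3r1), which I read as the assertion that $c:=[x_1,x_2,-]$ commutes with the ternary bracket, namely $c([y_1,y_2,z])=[c(y_1),c(y_2),c(z)]$. By Lemma~\ref{xx1}(4), $c$ is a trimonoid endomorphism, so it preserves each of $\vdash,\perp,\dashv$. Writing $\sigma_0:=y_1\perp y_2$, so that $[y_1,y_2,z]=\sigma_0\vdash z\dashv\sigma_0^{-1}$ by Lemma~\ref{inverse}(3), I expand
$$c([y_1,y_2,z])=c(\sigma_0)\vdash c(z)\dashv c(\sigma_0^{-1}),$$
while, using $c(y_1)\perp c(y_2)=c(\sigma_0)$,
$$[c(y_1),c(y_2),c(z)]=c(\sigma_0)\vdash c(z)\dashv c(\sigma_0)^{-1}.$$
Hence (3r1) reduces to the single identity $c(\sigma_0^{-1})=c(\sigma_0)^{-1}$, i.e.\ to showing that $c$ preserves inverses.

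This inverse-preservation is the main obstacle, since inverses in a trigroup are not involutive and a trimonoid morphism need not respect them a priori. I would prove $[x_1,x_2,w^{-1}]=([x_1,x_2,w])^{-1}$ directly: with $\theta:=x_1\perp x_2$, Lemma~\ref{inverse}(4) gives $([x_1,x_2,w])^{-1}=(\theta\vdash w\dashv\theta^{-1})^{-1}=(\theta^{-1})^{-1}\dashv w^{-1}\dashv\theta^{-1}$; since $(\theta^{-1})^{-1}=\theta\vdash 1$ by Lemma~\ref{inverse}(1), while $(\theta\vdash 1)\dashv w^{-1}=\theta\vdash(1\dashv w^{-1})=\theta\vdash w^{-1}$ by $(L2)$ together with $1\dashv w^{-1}=w^{-1}$ (Lemma~\ref{inverse}(1),(4)), this collapses to $\theta\vdash w^{-1}\dashv\theta^{-1}=[x_1,x_2,w^{-1}]$, as required. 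As an alternative route avoiding inverse-preservation, one can invoke Lemma~\ref{xyz}(3), which already rewrites $[x_1,x_2,[y_1,y_2,z]]$ as $[t_1,t_2\dashv\theta,z]$; it then remains to identify this with $[t_1,t_2,[x_1,x_2,z]]$ by checking, via Lemma~\ref{xyz}(1)--(2) and the fact that $\theta^{-1}\vdash\theta\in\LieU_A$ (it is killed by the homomorphism $\phi$ of Lemma~\ref{inverse}(6)), that the two conjugators differ by a bar-unit and hence induce the same conjugation on $z$.
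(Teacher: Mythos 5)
Your proof is correct, and for the crucial axiom (3r1) it takes a genuinely different route from the paper's. The paper verifies (3r1) by direct conjugation calculus: it invokes Lemma \ref{xyz}(3) to rewrite $[x_1,x_2,[y_1,y_2,z]]$ as $[t_1,t_2\dashv\theta,z]$ and then massages the conjugator $t_1\perp(t_2\dashv\theta)=(t_1\perp t_2)\dashv\theta$ using (R2), Lemma \ref{inverse}(4), (L1) and associativity until $[t_1,t_2,[x_1,x_2,z]]$ emerges. You instead make the structural observation that (3r1) says precisely that $c=[x_1,x_2,-]$ is a homomorphism for the bracket; since Lemma \ref{xx1}(4) already makes $c$ a trimonoid endomorphism, everything reduces to inverse-preservation $c(w^{-1})=c(w)^{-1}$, which is genuinely nontrivial here (inverses are not involutive) and which you establish correctly: Lemma \ref{inverse}(4) gives $(\theta\vdash w\dashv\theta^{-1})^{-1}=(\theta^{-1})^{-1}\dashv w^{-1}\dashv\theta^{-1}$, and then $(\theta^{-1})^{-1}=\theta\vdash 1$, (L2), and $1\dashv w^{-1}=w^{-1}$ (Lemma \ref{inverse}(1),(4)) collapse this to $[x_1,x_2,w^{-1}]$. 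In effect you upgrade $c$ from trimonoid endomorphism to trigroup endomorphism, which isolates the single identity the paper's longer rewriting chain keeps implicit; the paper's route, conversely, never needs inverse-preservation at all. Your treatments of (3r2) and (3r3) coincide with the paper's in substance: same witness $z_0=(\sigma^{-1}\vdash b)\dashv\sigma$ and the same cancellation-style uniqueness, and in fact your explicit appeal to (R1) to evaluate $b\dashv(\sigma\dashv\sigma^{-1})=b\dashv(\sigma\vdash\sigma^{-1})=b$ is more careful than the paper's display, which silently treats $\theta\dashv\theta^{-1}$ as if it were $1$. Your closing alternative for (3r1) via $\theta^{-1}\vdash\theta\in\LieU_A$ is only a sketch, but nothing rests on it since your main argument is complete.
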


\begin{proof}
To verify the axiom (3r1), we have by the property 3) of Lemma \ref{xyz} that \\$[x_1,x_2,[y_1,y_2,z]]=[t_1,t_2\dashv \theta,z]$ where $t_1=[x_1, x_2,y_1],$ $t_2=[x_1, x_2,y_2]$  and \\$\theta=x_1\perp x_2.$
So  

$
 \begin{aligned} 
 ~\big[x_1,x_2,[y_1,y_2,z]\big]&=[t_1,t_2\dashv \theta,z]=(t_1\perp(t_2\dashv \theta))\vdash z\dashv ((t_1\perp(t_2 \dashv\theta))^{-1}\\&\stackrel{R2}=((t_1\perp t_2)\dashv \theta)\vdash z\dashv ((t_1\perp t_2) \dashv\theta)^{-1}\\&=((t_1\perp t_2)\dashv \theta)\vdash z\dashv (\theta^{-1}\dashv (t_1\perp t_2)^{-1})\\&\stackrel{L1}=(t_1\perp t_2)\vdash (\theta\vdash z\dashv (\theta^{-1}\dashv (t_1\perp t_2)^{-1}))~\\&\stackrel{ass}=(t_1\perp t_2)\vdash (\theta\vdash z\dashv \theta^{-1})\dashv (t_1\perp t_2)^{-1}\\&=(t_1\perp t_2)\vdash [x_1,x_2,z]\dashv(t_1\perp t_2)^{-1}=\big[t_1,t_2,[x_1,x_2,z]\big]\\&=\big[[x_1, x_2,y_1],[x_1, x_2,y_2],[x_1,x_2,z]\big].\end{aligned}$
 
To show the axiom (3r2), let   $x,y,b\in A,$ and set $\theta=x\perp y$ and  $z_0=\theta^{-1}\vdash b\dashv\theta.$ Then 

$
 \begin{aligned} 
 ~[x,y,z_0]_{R}&= \theta\vdash (\theta^{-1}\vdash b\dashv\theta)\dashv\theta^{-1}=(\theta\vdash \theta^{-1})\vdash b\dashv(\theta\dashv\theta^{-1})\\&=1\vdash b\dashv 1=b\dashv 1=b.\end{aligned}$
 
   For uniqueness, let $z\in A$ such that   $[x,y,z]_{R}=b$ i.e.  $\theta\vdash z\dashv\theta^{-1}=b.$ 
   %We have $\theta^{-1}\vdash(\theta\vdash (z\dashv\theta^{-1}))=\theta^{-1}\vdash b.$ 
 %Then  $(\theta^{-1}\dashv\theta)\vdash (z\dashv\theta^{-1})=\theta^{-1}\vdash b.$  
 So   
 
 $
 \begin{aligned} 
 z\dashv\theta^{-1}&=1\vdash (z\dashv\theta^{-1})=(\theta^{-1}\dashv\theta)\vdash (z\dashv\theta^{-1})\\&\stackrel{L1}=\theta^{-1}\vdash(\theta\vdash (z\dashv\theta^{-1}))=\theta^{-1}\vdash b.\end{aligned}$ 
 
Therefore $$z=z\dashv 1=z\dashv (\theta^{-1}\dashv\theta)=(z\dashv \theta^{-1})\dashv\theta=(\theta^{-1}\vdash b)\dashv\theta=z_0.$$
%Then one verifies that $z=\theta^{-1}\vdash b\dashv\theta$ is the unique element of $A$ such that  $[x,y,z]_{R}=b.$  
The  axiom (3r3) is satisfied by the properties 2) and 3) of Lemma \ref{xx1}. \end{proof}

 %$[x_1,x_2,[y_1,y_2,y_3]]=\big[[x_1,x_2,y_1],[x_1,x_2,y_2],[x_1,x_2,y_3]\big]$ for all  $x_1,x_2,y_1,y_2,y_3\in A.$

%\item.  The set $J=\{x^{-1}: x\in A\}$ is a group in which $\vdash=\perp=\dashv.$

%\item  The mapping  $\phi: A\to J$ defined by $x\mapsto x^{-1}$ is an epimorphism of trigroups, and $\ker\phi=\{e\in A: e\vdash x=1\perp x=x= x\perp e= x\dashv e ~\mbox{for all}~~ x\in A\}.$

Now, we pay a particular attention to trigroups equipped with a smooth manifold structure.
\begin{De}
A Lie trigroup $(A,\vdash,\perp,\dashv)$  is a smooth manifold $A$ with a trigroup structure such that the operations $\vdash,\perp,\dashv: A\times A\to A$ and the inversion $(.)^{-1}:A\to A$ are smooth mappings. 
\end{De}

Clearly, the pointed 3-rack of Proposition 4.7 induced by a Lie trigroup inherits the smooth manifold structure, and is therefore a Lie 3-rack. It was proven in  \cite[Corollary 3.6]{BGR1} that the tangent space of a Lie 3-rack at the distinguish element 1 has a Leibniz 3-algebra structure. As a  a consequence we have the following  Corollary.

\begin{Co}
Let $(A,\vdash,\perp,\dashv)$  be a Lie trigroup and $(A, [-,-,-])$ its induced Lie 3-rack, and let $\Lieg:=T_1A.$ Then there exists a trilinear mapping  $[-,-,-]_{\Lieg}: \Lieg\times \Lieg\times \Lieg\to \Lieg$ such that $(\Lieg, [-,-,-]_{\Lieg})$ is a Leibniz 3-algebra.
\end{Co}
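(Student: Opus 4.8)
The statement is essentially a citation-driven corollary: once we know that the induced 3-rack $(A,[-,-,-])$ is a \emph{Lie} 3-rack, the Leibniz 3-algebra structure on the tangent space $T_1 A$ is handed to us by \cite[Corollary 3.6]{BGR1}. So the real content of the proof is to verify the two hypotheses that feed into that cited result: first, that $(A,[-,-,-])$ genuinely is a 3-rack (which is Proposition 4.7, already proved in the excerpt), and second, that this 3-rack is \emph{smooth}, i.e.\ that the ternary operation $[-,-,-]$ is a smooth map $A\times A\times A\to A$ and that $1$ is a distinguished point. The plan is therefore to assemble these two ingredients and then invoke the cited corollary verbatim.

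First I would record that, because $A$ is a Lie trigroup, the three binary operations $\vdash,\perp,\dashv$ and the inversion $(-)^{-1}$ are all smooth by hypothesis. The rack operation is the explicit composite
\[
[x,y,z]=(x\perp y)\vdash z\dashv (y^{-1}\perp x^{-1}),
\]
which is built entirely from these smooth maps together with the diagonal and projection maps of the manifold $A$; hence $[-,-,-]\colon A\times A\times A\to A$ is a composition of smooth maps and is itself smooth. Combined with Proposition 4.7, which already established that $(A,[-,-,-])$ satisfies the rack axioms (3r1)--(3r3) and is pointed at $1$, this shows that $(A,[-,-,-])$ is a pointed Lie 3-rack in the sense required by \cite{BGR1}.

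Second, with $\Lieg:=T_1 A$ the tangent space at the distinguished bar-unit $1$, I would apply \cite[Corollary 3.6]{BGR1}, which asserts precisely that the tangent space at the distinguished element of any Lie 3-rack carries a Leibniz 3-algebra structure. This produces the trilinear bracket $[-,-,-]_{\Lieg}\colon \Lieg\times\Lieg\times\Lieg\to\Lieg$, obtained by differentiating the rack operation $[-,-,-]$ at $1$ in each of its arguments, and guarantees that it satisfies the Leibniz 3-algebra identity. This yields exactly the claimed conclusion.

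The only genuinely delicate point, which I would flag rather than belabour, is the smoothness verification underlying the differentiation: one must ensure the manifold structure is compatible with the operations so that the relevant partial derivatives at $1$ exist and assemble into a well-defined trilinear map. But this is already subsumed in the definition of a Lie trigroup and in the hypotheses of \cite[Corollary 3.6]{BGR1}; no additional analytic work is needed here beyond observing that $[-,-,-]$ is a smooth composite. Thus the corollary follows immediately, and the proof is a short two-step argument: smoothness of the rack operation, then citation.
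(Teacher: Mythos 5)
Your proposal is correct and follows essentially the same route as the paper: the paper likewise observes (in the sentence preceding the corollary) that the pointed 3-rack of Proposition 4.7 inherits the smooth manifold structure from the Lie trigroup, making it a Lie 3-rack, and then concludes by invoking \cite[Corollary 3.6]{BGR1}. Your write-up is merely a bit more explicit than the paper's one-line remark, spelling out that $[x,y,z]=(x\perp y)\vdash z\dashv (y^{-1}\perp x^{-1})$ is a composite of the smooth operations $\vdash,\perp,\dashv$ and the smooth inversion.
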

%---------------------------------------------------------------------------------------

%t\section*{\bf Acknowledgements}
%We are indebted to the referee for very useful comments and remarks.

%---------------------------------------------------------------------------------------
%\newpage

\begin{center}

\end{center}

\end{document}